% !TEX encoding = UTF-8 Unicode
\documentclass[11pt,reqno]{amsart}

\usepackage{eucal,mathrsfs}

\usepackage[utf8]{inputenc}

\numberwithin{equation}{section}

\usepackage[a4paper,margin=1.2in]{geometry}
                		
% \geometry{letterpaper}

% \usepackage{setspace}

%doublespacing%
                   		
\usepackage{graphicx}				
										
\usepackage{amssymb}

\usepackage[utf8]{inputenc}

\usepackage[polish, english]{babel}

\usepackage[T1]{fontenc}

\usepackage{mathtools}

\usepackage{tikz-cd}

\usepackage{amsthm}

\usepackage[toc]{appendix}

\usepackage{calligra}

\usepackage{hyperref}

\linespread{1.1}

\setcounter{secnumdepth}{4}

\newtheorem{thm}{Theorem}[section]
\newtheorem{lem}[thm]{Lemma}

\theoremstyle{remark}

\newcommand{\cD}{{\mathcal D}}

\newcommand{\cO}{{\mathcal O}}

\renewcommand{\AA}{{\mathbb A}}

\newcommand{\CC}{{\mathbb C}}
  %RING OF DIFFERENTIAL OPERATORS%

\newcommand{\WW}{{\mathbb W}}
\newcommand{\ZZ}{{\mathbb Z}}

\newcommand{\oo}{{{\mathfrak o}_{K}}}

\newcommand{\im}{{\textnormal{im }}}

\newcommand{{\D}}{{\mathscr{D}_{X}}} %SHEAF OF DIFFERENTIAL OPERATORS%

\newcommand{\DR}{\mathrm{DR}} 

\newcommand{\Dd}{{\widehat{\mathcal{D}}}_{n}}
\newcommand{\Ddo}{{\widehat{\mathcal{D}}}_{n}^{0}}

  %INTEGRAL RING OF DIFFERENTIAL OPERATORS%
\newcommand{\cDc}{{\widehat{\mathcal{D}}}} %COMPLETED RING OF DIFFERENTIAL OPERATORS%
\newcommand{\cDco}{{\widehat{\mathcal{D}}_{0}}} %COMPLETED INTEGRAL RING OF DIFFERENTIAL OPERATORS%

%%%%%%%%%%%%%%%%%%%%%%%%%%%%%

\title[Modules over completed Weyl algebras]{Modules of minimal dimension over completed Weyl algebras} %TYTUŁ%

\author{Feliks Rączka}

\address{Institute of Mathematics, University of Warsaw, ul.\ Banacha 2, 02-097, Warsaw, Poland}

  \address{Institute of Mathematics, Polish Academy of Sciences, ul.\ Śniadeckich 8,
    \newline\indent 00-656 Warsaw, Poland
  }

\email{fraczka@impan.pl}

\date{\today}

\begin{document}

 %Data??%

\begin{abstract}
We study the category of modules of minimal dimension over completed Weyl algebras in equal characteristic zero. In particular we prove finiteness of de Rham cohomology of such modules.
\end{abstract}

\maketitle

\section{Introduction}\label{Introduction}

This is the first in the series of papers devoted to the study of $\cD$-modules on rigid analytic varieties over the field $\CC(\!(z)\!)$ of formal Laurent series (and, more generally, over a discretely valued nonarchimedean field of equal characteristic zero). The goal of these papers is to prove that on a (quasi-compact and quasi-separated) smooth rigid variety de Rham cohomology groups of holonomic $\cD$-modules have finite dimensions. In this paper we deal with the analogous algebraic problem concerning modules of minimal dimension over completed Weyl algebras.

Let $K$ be a discretely valued nonarchimedean field and let $\oo$ be its valuation ring. We also fix a uniformizer $\varpi\in \oo.$ Let $\WW_{n}(\oo)$ denote the $n$-th Weyl algebra over $\oo$, i.e., the noncommutaitve $\oo$-algebra generated by $x_{1},\dots,x_{n},\partial_{1},\dots,\partial_{n}$ with relations $[x_{i},x_{j}]=[\partial_{i},\partial_{j}]=0$ and $[\partial_{i},x_{j}]=\delta_{ij}.$ We set
\begin{equation}\label{IntegralCompletedWeyl}
\Ddo=\varprojlim\frac{\WW_{n}(\oo)}{\varpi^{s+1}\WW_{n}(\oo)}
\end{equation}
and define the $n$-th \textit{completed Weyl algebra} over $K$ as
\begin{equation}\label{CompletedWeyl}
\Dd=\Ddo\otimes_{\oo}K
\end{equation}
Algebraic properties of completed Weyl algebras have been studied by many authors, for example by L.\ Narv\'aez Macarro in \cite{Narvaez}, and more recently by A.\ Pangalos in \cite{Pangalos}.

If $M$ is a left $\Dd$-module then we can consider the \textit{de Rham complex} of $M$
\begin{equation}\label{DRComplex}
\DR^{s}(M)=\bigoplus_{|I|=s}M.dx_{I}
\end{equation}
where $I=(1\leq i_{1}<\dots<i_{s}\leq n)$ is a multi-index, $dx_{I}=dx_{i_{1}}\wedge\dots\wedge dx_{i_{s}}$ and the derivative $\delta$ is given by
\begin{equation}\label{deRhamdif}
\delta(m.dx_{I})=\sum_{i=1}^{n}\partial_{i}m.dx_{i}\wedge dx_{I}
\end{equation}
We then define \textit{de Rham cohomology} of $M$ as the cohomology of this complex
\begin{equation}\label{DRCohomology}
H^{i}_{dR}(M):=H^{i}(\DR^{\bullet}(M))
\end{equation}

The ring $\Dd$ is known to be left and right noetherian and and it has been shown by Pangalos that it has homological dimension $n$. A left $\Dd$-module is said to be \textit{of minimal dimension} if it is finitely generated and $\textnormal{Ext}^{i}_{\Dd}(M,\Dd)=0$ for $i<n$. This is the algebraization of the geometric notion of holonomicity in the sense that if $R$ is a ring of differential operators of the affine algebra of a smooth algebraic $\CC$-variety $X$ then left $R$-modules of minimal dimension correspond to holonomic $\cD_{X}$-modules.

In this paper we will prove the following theorem.

\begin{thm}\label{MainThm0}
Let $K$ be a discretely valued nonarchimedean field of equal characteristic zero and let $M$ be a left $\Dd$-module of minimal dimension. Then $\dim_{K}H^{i}_{dR}(M)<\infty$ for all $i$.
\end{thm}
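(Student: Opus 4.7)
The plan is to adapt Bernstein's classical argument for holonomic modules over the polynomial Weyl algebra $\WW_n$ to the completed setting. First, since $[\partial_i,\partial_j]=0$, the de Rham complex $\DR^\bullet(M)$ is nothing but the Koszul complex of the commuting operators $\partial_1,\ldots,\partial_n$ acting on $M$, so $H^i_{dR}(M)\cong \textnormal{Ext}^i_{\Dd}(\widehat{\cO},M)$, where $\widehat{\cO}:=\Dd/\Dd(\partial_1,\ldots,\partial_n)$ is equipped with its free Koszul resolution. Using that $\Dd$ has global dimension $n$ (Pangalos) and that the same Koszul resolution shows $\widehat{\cO}$ is itself of minimal dimension, one can equivalently write $H^i_{dR}(M)\cong \textnormal{Tor}^{\Dd}_{n-i}(\widehat{\Omega}^n,M)$, where $\widehat{\Omega}^n := \Dd/(\partial_1,\ldots,\partial_n)\Dd$ is the ``top form'' right module.

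Next, I would introduce a Bernstein-type filtration $F$ on $\Dd$: over $\oo$, let $F_k\Ddo$ be the $\varpi$-adic completion inside $\Ddo$ of the $\oo$-span of operators $\sum a_{\alpha\beta}x^\alpha\partial^\beta$ with $|\alpha|+|\beta|\leq k$, and set $F_k\Dd := F_k\Ddo\otimes_\oo K$. The associated graded $\textnormal{gr}^F\Dd$ should then be a commutative noetherian $K$-algebra of Krull dimension $2n$, concretely a Tate-type algebra in the $x_i,\xi_i$ (where $\xi_i$ is the symbol of $\partial_i$). Along standard lines one shows that every finitely generated $\Dd$-module admits a good $F$-filtration and establishes a Bernstein inequality $\dim\textnormal{gr}^FM\geq n$, so the minimal-dimension hypothesis translates into $\dim\textnormal{supp}(\textnormal{gr}^FM)=n$. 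Filter $\DR^\bullet(M)$ by $F$ (shifted because $\partial_i\in F_1$): the associated graded is the Koszul complex of the symbols $\xi_1,\ldots,\xi_n$ acting on $\textnormal{gr}^FM$, computing $\textnormal{Tor}^{\textnormal{gr}^F\Dd}_\bullet(\textnormal{gr}^F\Dd/(\xi),\textnormal{gr}^FM)$.

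Via the spectral sequence associated with this filtration, finite-dimensionality of $H^i_{dR}(M)$ reduces to showing these graded $\textnormal{Tor}$ groups are finite-dimensional over $K$. Here both $\textnormal{supp}(\textnormal{gr}^FM)$ and the ``zero section'' $V(\xi_1,\ldots,\xi_n)$ have dimension $n$ inside the $2n$-dimensional spectrum of $\textnormal{gr}^F\Dd$, and by involutivity of the characteristic variety (a Gabber-type theorem in this setting) combined with the conic structure of the Bernstein filtration, their intersection should be zero-dimensional; the $\textnormal{Tor}$'s are then finitely generated modules over $\textnormal{gr}^F\Dd/(\xi)$ supported on a $0$-dimensional subscheme, and hence finite-dimensional over $K$.

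The main obstacle is arranging the completed-Bernstein filtration so that all the necessary inputs actually work in the Tate-type setting: noetherianity and commutativity of $\textnormal{gr}^F\Dd$, existence and behaviour of good filtrations, the Bernstein inequality, a Gabber-type involutivity theorem, and, most delicately, the proper-intersection statement in the final step. The graded ring is not an ordinary polynomial ring but a mix of a Tate algebra with a polynomial algebra, and the crucial subtlety is ruling out positive-dimensional components of the characteristic variety lying inside $V(\xi)$ (which would ``leak'' de Rham cohomology to infinite dimension). This is the place where the minimal-dimension hypothesis and the specific interplay between the Bernstein degree and the $\varpi$-adic topology of $\Dd$ must be used essentially.
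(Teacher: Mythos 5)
Your proposal takes a genuinely different route from the paper, but it has a fundamental gap that I do not think is repairable along the lines you sketch.

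The paper does not attempt to re-prove Bernstein-style finiteness directly over $\Dd$. Instead it reduces to the classical case: it shows (the hard implication $i)\implies ii)$ of Theorem \ref{MainThm1}) that a minimal-dimension $\Dd$-module $M$ admits a $\Ddo$-lattice $L$ whose reduction $\overline{L}=L\otimes_{\oo}k$ is a holonomic $\WW_{n}(k)$-module, and then, since $\textnormal{char }k=0$, quotes the classical theorem that holonomic $\WW_{n}(k)$-modules have finite-dimensional de Rham cohomology. Finiteness on the generic fibre is then lifted by the purely homological Lemma \ref{PerfectComplex} (a perfect-complex Nakayama argument over the complete DVR $\oo$), which also yields the equality of Euler characteristics. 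The equal-characteristic hypothesis enters precisely so that the reduction lands in the well-understood world of characteristic-zero Weyl algebras.

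The difficulty with your plan is in the very first ingredient: the Bernstein filtration you propose on $\Dd$ does not exist as an exhaustive filtration. By your construction, $F_{k}\Ddo$ is the $\oo$-span of monomials $x^{\alpha}\partial^{\beta}$ with $|\alpha|+|\beta|\leq k$; this is a finite free $\oo$-module, hence already $\varpi$-adically complete, so the completion step adds nothing and
$\bigcup_{k}F_{k}\Ddo=\WW_{n}(\oo)\subsetneq\Ddo.$
Indeed, elements such as $\sum_{s\geq0}\varpi^{s}x^{s}$ lie in $\Ddo$ but in no $F_{k}\Ddo$. Consequently the filtration is not exhaustive, good filtrations on arbitrary finitely generated $\Dd$-modules do not exist in the usual sense, and the spectral sequence of the filtered de Rham complex does not converge to $H^{i}_{dR}(M)$. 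This is not a minor technicality to be fixed later: it is the reason the whole ``filter and pass to $\textnormal{gr}$'' machinery, together with the Bernstein inequality, Gabber involutivity, and the proper-intersection argument, cannot be run verbatim over $\Dd$. You yourself flag the final intersection step as the crucial unresolved point, but the problem arises already one step earlier. By contrast, in the paper's approach all of these characteristic-variety facts are used only over $\WW_{n}(k)$, where they are classical, and the passage from $\overline{L}$ back to $M$ is handled by a soft Nakayama/completeness argument that never needs a characteristic-variety theory for $\Dd$ itself.

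One small remark on a correct part of your plan: the identification $H^{i}_{dR}(M)\cong\textnormal{Tor}^{\Dd}_{n-i}(\Dd/(\partial_{1},\dots,\partial_{n})\Dd,\;M)$ via the Koszul resolution is fine and is implicitly the reason the paper can compare $\DR^{\bullet}(L)\otimes_{\oo}k$ with $\DR^{\bullet}(\overline{L})$ termwise. But from there the paper reduces to characteristic-variety theory over $k$, rather than trying to develop it over $K$.
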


Let $k$ be the residue field of $\oo.$ Then 

\[
\overline{\cD}_{n}=\Ddo/\varpi\Ddo=\Ddo\otimes_{\oo}k
\]
is isomorphic to the $n$-th Weyl algebra over $k$ and if $\textnormal{char }k=0$ then $\overline{\cD}_{n}$ is the ring of algebraic differential operators of the affine $n$-space $\AA^{n}_{k}.$

The idea for the proof of Theorem \ref{MainThm0} is to study $\Ddo$-modules and compare their properties \textit{``on the generic and the special fiber''}, i.e., after tensoring with $K$ and $k$ respectively.
If $M$ is a finitely generated left $\Dd$-module then a \textit{lattice} in $M$ is a finitely generated $\Ddo$-submodule $L\subset M$ such that $L\otimes\oo K=M.$ We write $\overline{L}$ for the left $\overline{\cD}_{n}$-module $L\otimes\oo k$ and call it the \textit{reduction} of $L.$ Then a more refined version of Theorem \ref{MainThm0} is the following theorem.
\begin{thm}\label{MainThm1}
Let $K$ be a discretely valued nonarchimedean field of equal characteristic zero and let $M$ be a finitely generated left $\Dd$-module. Then the following conditions are equivalent:

\begin{enumerate}

\item[i)] $M$ is of minimal dimension.

\item[ii)] There exists a lattice $L\subset M$ such that $\overline{L}$ is a $\overline{\cD}_{n}$-module of minimal dimension.

\item[iii)] For any lattice $L\subset M$ the reduction $\overline{L}$ is a $\overline{\cD}_{n}$-module of minimal dimension.

\end{enumerate}
If those equivalent conditions are satisfied, then moreover

\begin{enumerate}

\item[A)] The semisimplification of $\overline{L}$ does not depend on $L.$

\item[B)] We have $\dim_{K}H_{dR}^{i}(M)<\infty$ for all $i$ and the equality $\chi_{dR}(M)=\chi_{dR}(\overline{L})$ holds.
\end{enumerate}
\end{thm}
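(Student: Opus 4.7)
The plan is to compare $\Dd$-, $\Ddo$-, and $\overline{\cD}_n$-modules via a universal coefficient theorem applied to a lattice. Given a lattice $L \subset M$, $L$ is $\oo$-torsion-free (as a submodule of the $K$-vector space $M$), so any resolution $P^\bullet \to L$ by finitely generated free $\Ddo$-modules (available since $\Ddo$ is noetherian) yields, after $-\otimes_\oo K$, a free resolution of $M$ over $\Dd$, and after $-\otimes_\oo k$, a free resolution of $\overline{L}$ over $\overline{\cD}_n$. Writing $E^i := \textnormal{Ext}^i_{\Ddo}(L, \Ddo)$, a finitely generated right $\Ddo$-module, this gives
\[
\textnormal{Ext}^i_{\Dd}(M, \Dd) = E^i \otimes_\oo K, \qquad 0 \to E^i/\varpi \to \textnormal{Ext}^i_{\overline{\cD}_n}(\overline{L}, \overline{\cD}_n) \to E^{i+1}[\varpi] \to 0,
\]
the second being the universal coefficient theorem over the DVR $\oo$. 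These are the main technical tools.

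The implication (ii) $\Rightarrow$ (i) is immediate from the sequences: assuming $\textnormal{Ext}^i_{\overline{\cD}_n}(\overline{L}, \overline{\cD}_n) = 0$ for $i < n$ forces $E^i = \varpi E^i$, and expressing a system of generators $e_j = \varpi \sum a_{jk} e_k$ as $(I - \varpi A) \vec e = 0$ with $I - \varpi A$ invertible in $M_r(\Ddo)$ via the Neumann series (here $\varpi$-adic completeness of $\Ddo$ is essential) yields $E^i = 0$, hence $\textnormal{Ext}^i_{\Dd}(M, \Dd) = 0$. For (ii) $\Leftrightarrow$ (iii) together with (A), I compare two lattices $L_1, L_2$ through $L := L_1 + L_2$ and an interpolating chain whose successive quotients are killed by $\varpi$; it then suffices to analyze $L_0 \subset L'$ with $\varpi L' \subset L_0$, so that $Q := L'/L_0$ is a $\overline{\cD}_n$-module. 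Tensoring $0 \to L_0 \to L' \to Q \to 0$ with $k$, and using $\oo$-flatness of $L_0, L'$ together with $\textnormal{Tor}^\oo_1(Q, k) = Q = Q \otimes_\oo k$, gives
\[
0 \to Q \to \overline{L_0} \to \overline{L'} \to Q \to 0,
\]
which transfers holonomicity in both directions (the category of holonomic $\overline{\cD}_n$-modules is Serre and extension-closed) and shows $[\overline{L_0}] = [\overline{L'}]$ in the Grothendieck group, i.e.\ the same composition factors and hence the same semisimplification, proving (A).

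The main obstacle is (i) $\Rightarrow$ (ii): from (i) one only knows that $E^i$ is $\varpi$-power-torsion for $i < n$, not zero. My intended attack goes through the Auslander regularity of $\Ddo$, which should follow from Pangalos's framework together with the change-of-rings spectral sequence derived from $0 \to \Ddo \xrightarrow{\varpi} \Ddo \to \overline{\cD}_n \to 0$, giving $\textnormal{Ext}^{p+1}_{\Ddo}(\overline{N}, \Ddo) \cong \textnormal{Ext}^p_{\overline{\cD}_n}(\overline{N}, \overline{\cD}_n)$ for any $\overline{\cD}_n$-module $\overline{N}$. Combined with the Auslander grade estimate $j_{\Ddo}(E^i(L)) \geq i$, the fact that nonzero $\varpi$-torsion f.g.\ $\Ddo$-modules have grade at least $1$, and the Auslander biduality spectral sequence $\textnormal{Ext}^p_{\Ddo}(\textnormal{Ext}^q_{\Ddo}(L, \Ddo), \Ddo) \Rightarrow L$, this should rigidify the $E^i(L)$ and force their vanishing in the range $i < n$ for a suitable choice of $L$. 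A constructive fallback is to lift a finite free resolution of $M$ over $\Dd$ to $\Ddo$ by clearing $\varpi$-denominators in the differentials (the composition of consecutive lifted differentials is $\varpi$-torsion in $\Ddo$, hence zero) and to define $L$ as the degree-$0$ cokernel, then analyze its Ext groups directly from the lifted complex.

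For (B), identify $H^i_{dR}(M) = \textnormal{Ext}^i_{\Dd}(\cO, M)$, where $\cO := \Dd/\sum_i \Dd\partial_i$ is resolved over $\Ddo$ by the Koszul complex on the pairwise commuting family $(\partial_1, \dots, \partial_n)$; this is a bounded complex of finitely generated free modules, and its reduction mod $\varpi$ is the Koszul resolution of $\overline{\cO} = k[x_1, \dots, x_n]$ over $\overline{\cD}_n$. Applying $\Hom(-, L)$ and invoking the universal coefficient theorem yields $H^i_{dR}(L) \otimes_\oo K = H^i_{dR}(M)$ together with short exact sequences linking $H^i_{dR}(L)$ with $H^i_{dR}(\overline{L})$. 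Since $\overline{L}$ is holonomic over the characteristic-zero Weyl algebra $\overline{\cD}_n$, the classical Bernstein/Kashiwara finiteness gives $\dim_k H^i_{dR}(\overline{L}) < \infty$ for every $i$. A topological Nakayama argument — exploiting that the terms of the de Rham complex of $L$ are finitely generated $\Ddo$-modules, hence $\varpi$-adically complete $\oo$-modules — then promotes finite-dimensionality mod $\varpi$ to finite generation of each $H^i_{dR}(L)$ over $\oo$, whence $\dim_K H^i_{dR}(M) < \infty$. Finally, $\chi_{dR}(M) = \chi_{dR}(\overline{L})$ follows from the additivity of the Euler characteristic across the universal coefficient sequences, both equalling $\sum_i (-1)^i \mathrm{rk}_\oo H^i_{dR}(L)$.
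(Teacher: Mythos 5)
The proposal reproduces the paper's toolkit almost verbatim for most of the implications: your universal coefficient sequence $0 \to E^i/\varpi \to \textnormal{Ext}^i_{\overline{\cD}_n}(\overline{L},\overline{\cD}_n) \to E^{i+1}[\varpi] \to 0$ is the paper's Lemma \ref{Kunneth}; the Neumann-series Nakayama argument for $(ii)\Rightarrow(i)$ is the paper's $(iii)\Rightarrow(i)$ via Lemma \ref{Lattices0}; your four-term sequence $0 \to Q \to \overline{L_0} \to \overline{L'} \to Q \to 0$ is an equivalent packaging of the paper's pair of sequences in Lemma \ref{Lattices1} and correctly yields $(ii)\Leftrightarrow(iii)$ and $(A)$; and your treatment of $(B)$ via completeness of the terms of $\DR^\bullet(L)$ plus a topological Nakayama argument is precisely Lemmas \ref{Kule} and \ref{PerfectComplex}. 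All of this is correct and close in spirit to the paper.

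The genuine gap is $(i)\Rightarrow(ii)$, which is the crux. You correctly identify that for a given lattice $L$ one only knows the $E^i(L)$ are $\varpi$-power torsion for $i<n$, and that one must \emph{construct} a suitable lattice. But neither of your two suggested routes closes this gap. The Auslander route rests on (a) Auslander regularity of $\Ddo$, which is not established anywhere in the paper and is asserted only to ``follow from Pangalos's framework,'' and (b) the claim that the grade estimate $j_{\Ddo}(E^i(L))\geq i$ together with biduality ``should rigidify the $E^i(L)$ and force their vanishing \dots for a suitable choice of $L$'' — but no mechanism is given to produce that $L$, and a grade lower bound on a nonzero torsion module does not make it vanish. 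The fallback — lift a free $\Dd$-resolution of $M$ to a complex $\tilde F^\bullet$ of free $\Ddo$-modules and take $L=H^0(\tilde F^\bullet)$ — does not give direct control on $\textnormal{Ext}^i_{\Ddo}(L,\Ddo)$, because $\tilde F^\bullet$ is generally \emph{not} a resolution of $L$: it can have $\varpi$-torsion cohomology in negative degrees, so $\Hom_{\Ddo}(\tilde F^\bullet,\Ddo)$ computes neither $\textnormal{Ext}^\bullet(L,\Ddo)$ nor anything cleanly comparable. The paper's actual construction is different and does work: take any lattice $V$ in the right module $N=M^*$, form $\textnormal{Ext}^n_{\Ddo}(V,\Ddo)$ and quotient by its $\varpi$-torsion to get $L$. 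Then the injection $\overline{\cD}_n\otimes_{\Ddo}\textnormal{Ext}^n_{\Ddo}(V,\Ddo) \hookrightarrow \textnormal{Ext}^n_{\overline{\cD}_n}(\overline V,\overline{\cD}_n)$ from your UCT, together with the fact that $\textnormal{Ext}^n_{\overline{\cD}_n}(\overline V,\overline{\cD}_n)$ is always of minimal dimension (Lemma \ref{MinimalDimension}(b)) and that minimal dimension passes to submodules and quotients (Lemma \ref{MinimalDimension}(a)), forces $\overline L$ to be of minimal dimension — concretely and without any appeal to Auslander theory. Your intuition that the problem is a biduality phenomenon is in the right direction, but the paper carries it out explicitly through $M^*$ rather than through a regularity axiom.
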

Here 
\[
\chi_{dR}(M)=\sum(-1)^{i}\dim_{K}H_{dR}^{i}(M)
\]
is the \textit{Euler characteristic} for de Rham cohomology and $\chi_{dR}(\overline{L})$ is the Euler characteristic for de Rham cohomology of the holonomic $\cD_{\AA^{n}_{k}}$-module $\overline{L}$, which is known to be finite since $\textnormal{char }k=0.$ This is the main reason why the assumption that $K$ is of equal characteristic zero is crucial. In fact, once we know that a left $\Dd$-module of minimal dimension has a lattice with reduction of minimal dimension then it is fairly easy to ``lift'' finiteness of de Rham cohomology thanks to Lemma \ref{PerfectComplex} (which is very general and has nothing to do with $\Dd$-modules).

It is easy to give an example of the failure of Theorem \ref{MainThm0} when $K$ is of mixed characteristic with $\textnormal{char }k=p>0.$ The Tate algebra $K\langle x \rangle$ may be considered as a left $\widehat{\mathcal{D}}_{1}$-module of minimal dimension and the de Rham complex is
\[
\frac{d}{dx}:K\langle x \rangle\to K\langle x \rangle
\]
Since $|p|<1$, any power series of form 
\begin{equation}\label{nonintegrable}
\sum_{n\geq0}a_{n}p^{n}x^{p^{n}-1}
\end{equation}
where $|a_{n}|=1$ is an element of $K\langle x \rangle.$ These elements cannot be integrated with respect to $x$ in the sense that the formal power series 
\[
\int\sum_{n\geq0}a_{n}p^{n}x^{p^{n}-1}=\sum_{n\geq0}a_{n}x^{p^{n}}
\]
is not convergent for $|x|=1$ and thus not an element of $K\langle x \rangle.$ We conclude that $\dim_{K}H_{dR}^{1}(K\langle x \rangle)$ is infinite. Note that if the residue characteristic of $K$ is zero, then $|p|=1$ and power series of form (\ref{nonintegrable}) are not elements of $K\langle x\rangle.$ More generally, in the case of equal characteristic zero, if $f\in K[\![x]\!]$ and $\frac{df}{dx}\in K\langle x\rangle$ then $f\in K\langle x \rangle,$ i.e., every element of $K\langle x \rangle$ can be integrated with respect to $x$ and $H_{dR}^{1}(K\langle x \rangle)=0$. Therefore Theorem \ref{MainThm0} works in this simple case.

In section \ref{Preliminaries} we recall basic results about completed Weyl algebras and modules of minimal dimension. In Section \ref{Algebras over discrete valuation rings} we discuss some more sophisticated properties of modules over $\oo$-algebras. In Section \ref{Proof} we apply results of previous sections to give the proof of Theorem \ref{MainThm1}.

\subsection*{Acknowledgements}
This work was supported by the project KAPIBARA funded by the European Research Council (ERC) under the European Union's Horizon 2020 research and innovation programme (grant agreement No 802787). Part of this work was created during author's visit in Palaiseau. This visit was supported by the ``Inicjatywa Doskonałości-Uczelnia Badawcza'' programme funded by the University of Warsaw. The author thanks Piotr Achinger and Adrian Langer for numerous discussions and helpful comments. The author thanks Stefano Alo\'e, Javier Fres\'an, and Gabriel Ribeiro for their hospitality during his stay in Palaiseau.

\section{Preliminaries}\label{Preliminaries}

From now till the end of this paper $K$ is a fixed discretely valued nonarchimedean field of equal characteristic zero. We let $\oo\subset K$ be the valuation ring and $k$ the residue field. We fix a uniformizer $\varpi\in \oo$. Although we will not use it, we note that by the Cohen structure theorem a choice of a uniformizer gives an isomorphism $K=k(\!(\varpi)\!).$

\subsection{Modules of minimal dimension}\label{Dual1}\label{minimaldimension1}
In this subsection we recall basic properties of modules of minimal dimension and basic properties of holonomic $\cD_{\AA^{n}_{k}}$-modules.

Let $R$ be a (not necessarily commutative) ring. We recall that the projective dimension of an $R$-module $M$ (written $pd(M)$) is the infimum of lengths of its projective resolutions. Then we define the left global dimension of $R$ as
\[
\textnormal{l.gl.dim}(R)=\sup\{pd(M):\textit{$M$ is a left $R$-module}\}
\]
The right global dimension (written $\textnormal{r.gl.dim}(R)$) is defined in the same way. If $R$ is left and right noetherian then by \cite[Exercise 4.1.1]{Weibel} left and right global dimensions of $R$ are equal and we define the \textit{global dimension} of $R$ as
\[
\textnormal{gl.dim}(R)=\textnormal{l.gl.dim}(R)=\textnormal{r.gl.dim}(R)
\]
Now, let $R$ be a left and right noetherian ring of finite global dimension $\textnormal{gl.dim}(R)=n.$ Following \cite[1.2]{Meb2} we say that a finitely generated left (resp. right) $R$-module $M$  is \textit{of minimal dimension} if
\[
\inf\{i:\textnormal{Ext}^{i}_{R}(M,R)\neq0\}=n.
\]
For such module we set
\[
M^{*}=\textnormal{Ext}^{n}_{R}(M,R)
\]
The following Lemma is well-known. Since it is an important ingredient in the proof of Theorem \ref{MainThm1} we sketch the proof for completeness. 

\begin{lem}\label{star}
Let $M$ be a left (resp. right) $R$-module of minimal dimension. Then $M^{*}$ is a right (resp. left) $R$-module of minimal dimension and $M^{**}=M.$
\end{lem}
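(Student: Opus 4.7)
The plan is to produce a natural finite projective resolution of $M^{*}$ by dualizing a projective resolution of $M$, and then apply biduality for finitely generated projective modules to read off $\textnormal{Ext}^{\bullet}_{R^{\mathrm{op}}}(M^{*},R)$ in one step.

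First, I would pick a projective resolution $P_{\bullet}\to M$ of length at most $n$ by finitely generated projective left $R$-modules (available since $R$ is left noetherian, $M$ is finitely generated, and $R$ has global dimension $n$). Applying $\Hom_{R}(-,R)$ produces a bounded cochain complex $D^{\bullet}$ of finitely generated projective right $R$-modules, supported in degrees $0,\dots,n$, whose cohomology computes $\textnormal{Ext}^{\bullet}_{R}(M,R)$. The minimal dimension hypothesis forces this cohomology to be concentrated in degree $n$ with value $M^{*}$. Reversing the indexing and augmenting by $M^{*}$ therefore exhibits $D^{\bullet}$ as a finite projective resolution $D_{\bullet}\to M^{*}$ of $M^{*}$ (as a right $R$-module) of length $n$ by finitely generated projectives.

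The second step is to compute $\textnormal{Ext}^{\bullet}_{R^{\mathrm{op}}}(M^{*},R)$ using this resolution. Applying $\Hom_{R^{\mathrm{op}}}(-,R)$ to $D_{\bullet}$ and invoking the biduality isomorphism $\Hom_{R^{\mathrm{op}}}(\Hom_{R}(P,R),R)\cong P$ for finitely generated projective $P$, I recover $P_{\bullet}$ in reversed cohomological degree, i.e.\ the complex $P_{n}\to P_{n-1}\to\cdots\to P_{0}$ placed in degrees $0,\dots,n$. Its cohomology is concentrated in the top degree $n$, where it equals $H_{0}(P_{\bullet})=M$. This yields $\textnormal{Ext}^{i}_{R^{\mathrm{op}}}(M^{*},R)=0$ for $i<n$ and $\textnormal{Ext}^{n}_{R^{\mathrm{op}}}(M^{*},R)=M$, delivering both conclusions at once.

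I do not anticipate a serious obstacle: the content of the argument is formal and the main thing to watch is the bookkeeping of reversed degrees. The one point worth verifying (but automatic for finitely generated projective modules, and hence propagated along the resolution) is that the identification $M\cong M^{**}$ produced by this construction agrees with the canonical evaluation map, so that biduality is functorial in the expected sense.
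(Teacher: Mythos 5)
Your argument is correct and is essentially the paper's own proof: dualize a finite resolution by finitely generated projectives, use the minimal-dimension hypothesis to see the dualized complex resolves $M^{*}$, and then dualize again and invoke reflexivity of finitely generated projectives to read off $\textnormal{Ext}^{\bullet}_{R}(M^{*},R)$. The only cosmetic difference is your explicit remark about the recovered isomorphism $M\cong M^{**}$ agreeing with the evaluation map, a point the paper leaves implicit.
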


\begin{proof}
It is well known that if P is a finitely generated projective left (resp. right) module, then its dual $P^{\vee}=\textnormal{Hom}_{R}(P,R)$ is a finitely generated projective right (resp. left) module and the natural map $P\to P^{\vee\vee}$ is an isomorphism. Let $M$ be a left (resp. right) $R$-module of minimal dimension.

Since $R$ is noetherian we know that $M$ admits a finite projective resolution by finitely generated projective modules. Let $P_{\bullet}$ be such resolution and let $Q_{\bullet}=\textnormal{Hom}_{R}(P_{-\bullet},R)[n]$. First of all, we have $H_{i}(Q_{\bullet})=\textnormal{Ext}^{n-i}_{R}(M,R)$ and therefore $Q_{\bullet}$ is a projective resolution of $M^{*}.$ By reflexivity of finite projective modules we have $P_{\bullet}=\textnormal{Hom}_{R}(Q_{-\bullet},R)[n]$ and therefore

\begin{equation}
\textnormal{Ext}^{i}_{R}(M^{*},R)=H_{n-i}(P_{\bullet})=
\begin{cases}
0\textnormal{ if }i\neq n\\
M\textnormal{ if }i=n
\end{cases}
\end{equation}

This shows that $M^{*}$ is of minimal dimension and that $M^{**}=M.$
\end{proof}

If $R=\WW_{n}(k)$ for some field $k$ of characteristic zero then the category of left $R$-modules of minimal dimension coincides with the category of holonomic $\cD_{\AA^{n}_{k}}$-modules. This category is very well understood and for example the following properties of holonomic modules are well known.

De Rham complex and de Rham cohomology of a $\WW_{n}(k)$-module $M$ is defined by formulas (\ref{DRComplex}) and (\ref{DRCohomology}) of the introduction.

\begin{lem}\label{MinimalDimension}
 Let $M,M',M''$ be finitely generated left $\WW_{n}(k)$-modules. Then
\begin{enumerate}

\item[a)] If $0\to M'\to M\to M''\to 0$ is a short exact sequence then $M$ is of minimal dimension if and only if $M'$ and $M''$ are of minimal dimension.

\item[b)] $\textnormal{Ext}^{n}_{\WW_{n}(k)}(M,\WW_{n}(k))$ is a right $\WW_{n}(k)$-module of minimal dimension. 

\item[c)] If $M$ is of minimal dimension then it has finite length as a $\WW_{n}(k)$-module.

\item[d)] If $M$ is of minimal dimension then $\dim_{k}H^{i}_{dR}(M)<\infty$ for all $i.$

\end{enumerate}
The same holds with right and left replaced.
\end{lem}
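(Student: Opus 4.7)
The plan is to reduce everything to standard facts about the Bernstein filtration on the Weyl algebra $\WW_{n}(k)$. For a finitely generated (left or right) $\WW_{n}(k)$-module $M$, let $d(M)$ denote its Gelfand--Kirillov dimension with respect to the Bernstein filtration and set $j(M) = \inf\{i : \textnormal{Ext}^{i}_{\WW_{n}(k)}(M,\WW_{n}(k)) \neq 0\}$. Bernstein's inequality asserts $d(M) \geq n$, and for $\WW_{n}(k)$ (which is Auslander regular) one has the identity $j(M) + d(M) = 2n$; hence $M$ is of minimal dimension precisely when $d(M) = n$, the minimum value allowed by Bernstein's inequality.

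For part (a), I would invoke the standard identity $d(M) = \max(d(M'), d(M''))$ in a short exact sequence $0 \to M' \to M \to M'' \to 0$ of finitely generated modules, which is a direct consequence of compatibility of good filtrations with subobjects and quotients; combined with $d \geq n$, both directions of the equivalence in (a) follow immediately. Part (b) is nothing but Lemma~\ref{star} applied to $R = \WW_{n}(k)$, whose global dimension is known to be $n$. For part (c), the Bernstein multiplicity $e(M)$ is a positive integer that is additive on short exact sequences of modules all of dimension $n$; any proper holonomic submodule $M' \subsetneq M$ yields $e(M'') = e(M) - e(M') < e(M)$, so both ascending and descending chains of submodules terminate, giving that $M$ has finite length.

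The main obstacle is part (d), which is where genuine content enters. My preferred approach is to identify $\textnormal{DR}^{\bullet}(M)$ with (a shift of) the derived tensor product $\mathcal{O}_{\AA^{n}_{k}} \otimes^{L}_{\WW_{n}(k)} M$, where $\mathcal{O}_{\AA^{n}_{k}} = k[x_{1},\ldots,x_{n}]$ is regarded as a right $\WW_{n}(k)$-module via $f \cdot \partial_{i} = -\partial_{i}(f)$; concretely, the Spencer--Koszul resolution $\bigwedge^{\bullet} V \otimes_{k} \WW_{n}(k) \to \mathcal{O}_{\AA^{n}_{k}}$, where $V = \bigoplus_{i} k\partial_{i}$, computes this tensor product and recovers the de Rham complex after a shift. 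One then invokes Bernstein's theorem on preservation of holonomicity under $\cD$-module pushforward along the structure morphism $\AA^{n}_{k} \to \Spec k$; since a holonomic $\cD$-module on $\Spec k$ is just a finite-dimensional $k$-vector space, the desired finiteness follows. A more elementary alternative uses part (c) to reduce to simple modules and then verifies finiteness directly, for instance via a cyclic vector together with induction on the Bernstein dimension.
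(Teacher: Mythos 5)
Your arguments for parts (a), (c), and (d) are correct and follow essentially the same route as the paper: (a) and (c) via Bernstein dimension and multiplicity with respect to good filtrations, and (d) by identifying the de Rham complex with the $\cD$-module pushforward to a point and invoking Bernstein's preservation-of-holonomicity theorem (the paper cites Bjork Ch.~1 Thm~6.1 / HTT Thm~3.2.3 for exactly this).

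The gap is in part (b). You claim that (b) ``is nothing but Lemma~\ref{star} applied to $R = \WW_{n}(k)$,'' but Lemma~\ref{star} takes as hypothesis that $M$ is \emph{already of minimal dimension}, whereas part (b) of the present lemma is stated for an \emph{arbitrary} finitely generated module $M$, with no minimal-dimension assumption. This distinction is not cosmetic: in the proof of the implication $i)\Rightarrow ii)$ of Theorem~\ref{MainThm1}, part (b) is applied to $\overline{V}$, the reduction of an arbitrary lattice, and there is a priori no reason for $\overline{V}$ to be of minimal dimension — that is precisely what is being established. So the argument you give for (b) would be circular in the paper's intended application. The correct ingredient is the Auslander condition (which you already invoke implicitly when writing $j(M)+d(M)=2n$): for an Auslander regular ring $R$ of global dimension $n$ and any finitely generated $M$, every submodule $N\subseteq \textnormal{Ext}^{i}_{R}(M,R)$ satisfies $j(N)\geq i$. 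Taking $i=n$ and $N=\textnormal{Ext}^{n}_{R}(M,R)$ forces $j(N)=n$ (since $j\leq n$ always), so $N$ is of minimal dimension or zero, with no hypothesis on $M$. This is what the paper's reference to Mebkhout's Theorem~1.2.2 (resp.\ \cite[4.2.17]{Meb1}) supplies. You have the right tool on the table but cite the wrong lemma, and the substitution would break the downstream argument.
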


\begin{proof}
Properties a), b) and c) are discussed in \cite[1.2]{Meb2}. Property a) is discussed after Definition 1.2.4 of op.\ cit.\ and b) is a consequence of Theorem 1.2.2 of op.\ cit.\ See also \cite[4.2.17]{Meb1} for the proof proof of b). Property $c)$ is \cite[Prop. 1.2.5]{Meb2}.  The last statement is a special case of the classical theorem of Bernstein which states that higher direct images in the derived category of $\cD$-modules preserve holonomicity. We refer to \cite[Ch. 1 Thm 6.1]{Bjork} for the proof of this theorem in the case of Weyl algebras and to \cite[Thm 3.2.3]{HTT} for the proof in full generality.
\end{proof}

\subsection{Completed Weyl Algebras}

In this subsection we discuss some basic properties of the completed Weyl Algebras, i.e, rings $\Ddo$ and $\Dd$ defined by formulas (\ref{IntegralCompletedWeyl}) and (\ref{CompletedWeyl}) of the introduction.

\begin{lem}\label{AlgProp2}
Both $\Ddo$ and $\Dd$ are left and right noetherian. Moreover $\textnormal{gl.dim}(\Dd)=n.$
\end{lem}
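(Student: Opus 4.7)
The plan is to handle noetherianness first, via a $\varpi$-adic filtration argument on $\Ddo$, then deduce noetherianness of $\Dd$ by localization, and finally invoke the computation of Pangalos for the global dimension.

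For the first step, I would check that $\Ddo$ is $\varpi$-adically complete, separated, and $\varpi$-torsion free. Completeness and separatedness are immediate from the definition \eqref{IntegralCompletedWeyl} as an inverse limit, and $\varpi$-torsion freeness follows by a direct diagram chase: if $\varpi a = 0$ for $a = (a_{s})_{s} \in \Ddo$, any lift $\tilde{a}_{s} \in \WW_{n}(\oo)$ of $a_{s}$ satisfies $\varpi \tilde{a}_{s} \in \varpi^{s+1}\WW_{n}(\oo)$, hence $\tilde{a}_{s} \in \varpi^{s}\WW_{n}(\oo)$ by $\varpi$-torsion freeness of $\WW_{n}(\oo)$, so $a_{s-1} = 0$ for all $s$. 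With these properties in hand I can identify the associated graded ring: each piece $\varpi^{s}\Ddo/\varpi^{s+1}\Ddo$ is canonically isomorphic to $\overline{\cD}_{n} = \WW_{n}(k)$, and since $\varpi$ is central the ring structure assembles into $\mathrm{gr}_{\varpi}(\Ddo) \cong \overline{\cD}_{n}[T]$ with $T$ central of degree one. The Weyl algebra $\overline{\cD}_{n}$ is classically noetherian (via the Bernstein filtration), and the noncommutative Hilbert basis theorem for central variables then gives noetherianness of $\overline{\cD}_{n}[T]$.

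The second step is the standard lifting principle: a ring complete and separated for a positive filtration whose associated graded is left (resp.\ right) noetherian is itself left (resp.\ right) noetherian. Applying this to $\Ddo$ with its $\varpi$-adic filtration gives noetherianness on both sides; this is the argument used by Narv\'aez Macarro in \cite{Narvaez}. Noetherianness of $\Dd = \Ddo[\varpi^{-1}]$ then follows at once, as it is the localization of a noetherian ring at a central element.

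For the equality $\mathrm{gl.dim}(\Dd) = n$ the most direct route is to invoke Pangalos \cite{Pangalos}. The underlying idea is that projective resolutions over $\Dd$ can be constructed and controlled by passing through good $\varpi$-adic filtrations and reducing mod $\varpi$, comparing with projective resolutions over $\overline{\cD}_{n}$, which has global dimension $n$. A matching lower bound is witnessed, for instance, by $\Dd/(x_{1},\dots,x_{n})\Dd$, whose Koszul resolution by the pairwise-commuting $x_{i}$ has length exactly $n$. The main obstacle in carrying this out independently would be ensuring that the filtration comparison yields the sharp upper bound $n$ and not $n+1$, and that the passage from $\Ddo$ to $\Dd$ behaves well with respect to projective dimension; this is precisely the content of Pangalos's calculation, which I would rely on rather than reprove.
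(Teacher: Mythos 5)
Your argument is correct and reaches the same conclusions, but the route to noetherianness of $\Ddo$ is genuinely different from the paper's. The paper first establishes that the uncompleted Weyl algebra $\WW_{n}(\oo)$ is left and right noetherian (via the Bernstein filtration, whose graded ring is a polynomial ring over the noetherian ring $\oo$), and then invokes a theorem of McConnell \cite{McC} asserting that completion of a noetherian ring along a two-sided ideal generated by a central element again produces a noetherian ring. You instead work directly with the $\varpi$-adic filtration on $\Ddo$ itself: you verify $\varpi$-adic completeness, separatedness and torsion-freeness, identify $\mathrm{gr}_{\varpi}(\Ddo)\cong\overline{\cD}_{n}[T]$ with $T$ central, deduce that this is noetherian from noetherianness of $\overline{\cD}_{n}$ plus the noncommutative Hilbert basis theorem, and then apply the lifting principle for complete positively filtered rings with noetherian associated graded. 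This trades McConnell's completion theorem for a second application of the same filtered-ring lifting machinery already used on the mod-$\varpi$ Weyl algebra; it is arguably more self-contained, while the paper's route reuses the ambient ring $\WW_{n}(\oo)$ directly and avoids computing $\mathrm{gr}_{\varpi}$. One point to be a bit more careful about: $\varpi$-adic completeness of $\Ddo$ is not entirely ``immediate'' from the inverse-limit definition, since one must identify $\Ddo/\varpi^{s+1}\Ddo$ with $\WW_{n}(\oo)/\varpi^{s+1}\WW_{n}(\oo)$; this is fine here because $(\varpi)$ is a principal ideal generated by a central non-zero-divisor, but it is worth flagging rather than dismissing. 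For the global dimension both you and the paper simply invoke Pangalos \cite{Pangalos} (the paper cites Propositions 3.1.3 and 4.3.6 of that thesis for the lower and upper bound respectively), and your additional Koszul-complex sketch for the lower bound is consistent with, though not a replacement for, that reference.
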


\begin{proof}
First of all the ring $\WW_{n}(\oo)$ is left and right noetherian. Indeed, the associated graded ring of the \textit{Bernstein filtration} $F_{n}\WW_{n}(\oo)=\bigoplus_{|\alpha|+|\beta|\leq n}a_{\alpha\beta}x^{\alpha}\partial^{\beta}$ is the polynomial ring in $2n$ variables over $\oo$. Since the valuation on $K$ is discrete $\oo$ is noetherian and therefore so is any polynomial ring over $\oo.$ We can apply \cite[Prop. D.1.4]{HTT} which states that if the associated graded ring is noetherian then so is the original ring.

It is well known in the commutative case that for a noetherian ring $R$ its $I$-adic completion is again noetherian. While this needs not be the case for noncommutative rings, it follows from \cite[Proposition 2.1.]{McC} that the Theorem remains true if $I$ is a two sided ideal generated by a single central element. Because $\WW(\oo)$ is left and right noetherian and $\varpi$ is central we conclude that $\Ddo$ is left and right noetherian. Then $\Dd$ is left and right noetherian because it is a localization of $\cDco$ at $\varpi.$ This proves the first part of the Lemma.

The ``moreover'' part follows from the PhD thesis of A.\ Pangalos \cite{Pangalos}. More precisely, Proposition 3.1.3. of op.\ cit.\ gives a bound $\textnormal{gl.dim}(\Dd)\geq n$  and Proposition 4.3.6. gives a bound $\textnormal{gl.dim}(\Dd)\leq n.$
\end{proof}

We will also need the following properties of $\Ddo$-lattices. Recall that $\overline{L}=L\otimes_{\oo}k.$

\begin{lem}\label{Lattices0}
Let $L$ be a finitely generated left $\Ddo$-module. Then:

\begin{enumerate}

\item[a)] $L$ is complete in the $\varpi$-adic topology.

\item[b)] If $\overline{L}=0$ then $L=0.$

\end{enumerate}

\end{lem}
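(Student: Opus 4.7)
For part (a), the plan is to leverage the known completeness of $\Ddo$ itself by reducing to a presentation of $L$ and applying an Artin--Rees style argument. First I would pick finitely many generators $l_{1},\dots,l_{N}$ of $L$, producing the short exact sequence
\[
0\to K\to \Ddo^{N}\to L\to 0.
\]
Since $\Ddo$ is left noetherian by Lemma \ref{AlgProp2}, the submodule $K$ is itself finitely generated. The crucial step is the noncommutative Artin--Rees lemma for the ideal $(\varpi)$: there exists $c\ge0$ such that $\varpi^{s+c}\Ddo^{N}\cap K\subseteq \varpi^{s}K$ for every $s\ge0$. With this in hand, the $\varpi$-adic topology on $K$ coincides with the subspace topology from $\Ddo^{N}$, and the standard topological argument shows that $\varpi$-adic completion is exact on the above short exact sequence. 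Since $\Ddo^{N}$ is itself $\varpi$-adically complete directly from the definition of $\Ddo$ as an inverse limit, it follows that the canonical map $L\to \varprojlim_{s}L/\varpi^{s+1}L$ is an isomorphism.

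For part (b) I would then deduce the statement from (a) in two lines. If $\overline{L}=L/\varpi L=0$, then $\varpi L=L$; iterating, $\varpi^{s+1}L=L$ for every $s\ge0$. By (a) the map $L\to \varprojlim_{s}L/\varpi^{s+1}L$ is an isomorphism, hence in particular injective, so $\bigcap_{s}\varpi^{s+1}L=0$. Combining the two observations forces $L=0$.

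The only nontrivial input is the noncommutative Artin--Rees lemma, which I expect to be the main obstacle. Centrality of $\varpi$ is exactly what makes it accessible: the Rees ring $\bigoplus_{s\ge0}\varpi^{s}\Ddo\, t^{s}\subseteq \Ddo[t]$ coincides with the polynomial ring $\Ddo[\varpi t]$ (since $\varpi t$ is a central indeterminate over $\Ddo$), and this is left noetherian by the noncommutative Hilbert basis theorem; the classical proof of Artin--Rees from noetherianness of the Rees ring transfers verbatim. Alternatively one may quote the standard fact that a finitely generated module over a noetherian ring complete with respect to a central regular element is itself complete in that topology, which packages the same content.
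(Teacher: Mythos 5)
Your proof is correct and follows essentially the same route as the paper: both reduce completeness to the noncommutative Artin--Rees lemma for the centrally generated ideal $(\varpi)$, then run the standard Atiyah--Macdonald argument (finite presentation, exactness of $\varpi$-adic completion, completeness of $\Ddo^{N}$), and part (b) in both cases is the separated-module form of Nakayama. The paper simply cites Rowen for Artin--Rees and Atiyah--Macdonald Chapter 10 for the completion argument where you spell the steps out, but the content is identical.
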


\begin{proof}
Since $\varpi$ is central we can use the same reasoning as in the case of commutative noetherian rings. Since by \cite[p. 413]{Row} the Artin--Rees Lemma holds for finitely generated left $\Ddo$-modules we can proceed as in \cite[Ch.\ 10]{Atiyah} to check that if $I=(\varpi)$ and $L$ is a finitely generated left $\Ddo$-module, then
\[
\widehat{L}^{I}=\widehat{\Ddo}^{I}\otimes_{\Ddo}L=\Ddo\otimes_{\Ddo}L=L
\]
This proves the first statement of the lemma and the second statement follows from Nakayama's lemma for separated modules because $L=\widehat{L}^{I}$ is separated.
\end{proof}

\section{Algebras over discrete valuation rings}\label{Algebras over discrete valuation rings}

It is possible that Lemmas \ref{DVRsection1}, \ref{DVRsection2} and \ref{DVRsection3} below are known to the experts but we are not aware of any published proof in the form that we need. We will only use these results in case of the ring $\Dd$ to prove Theorem \ref{MainThm1} but since the proofs would not became easier nor shorter after restricting to this special case we present them in a more general setting. 

For the purpose of this section we assume that $B_{0}$ is a (not necessarily commutative) ring and $\pi\in B_{0}$ is a central element that is not a zero divisor. We set $B=B_{0}[\pi^{-1}]$ and $\overline{B}=B_{0}/\pi B_{0}.$ Because $\pi$ is not a zero divisor the natural map $B_{0}\to B$ is injective and we may write $B=\bigcup_{n\in\ZZ}\pi^{n}B_{0}.$ A model example of this situation is when $\pi$ is a uniformizer of some discrete valuation ring $\cO$ and $B_{0}$ is a flat $\cO$-algebra.

\subsection{Künneth type short exact sequences}\label{DVRsection1}

\begin{lem}\label{Kunneth}
Let $M$ be a right $B_{0}$-module that is $\pi$-torsion free and has a finite projective resolution by finitely generated modules. Then for each $i\geq0$ there are short exact sequences of left $\overline{B}$-modules
\[
0\to\overline{B}\otimes_{B_{0}}\textnormal{Ext}^{i}_{B_{0}}(M,B_{0})\to\textnormal{Ext}^{i}_{\overline{B}}(M\otimes_{B_{0}}\overline{B},\overline{B})\to\textnormal{Tor}_{1}^{B_{0}}(\overline{B},\textnormal{Ext}^{i+1}_{B_{0}}(M,B_{0}))\to0
\]
The same holds with left and right replaced and obvious modifications.
\end{lem}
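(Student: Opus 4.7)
The plan is a standard universal coefficient argument adapted to a noncommutative setting via the centrality of $\pi$. First I would pick a finite resolution $P_{\bullet}\to M$ by finitely generated projective right $B_{0}$-modules (which exists by hypothesis) and set $C^{\bullet}=\Hom_{B_{0}}(P_{\bullet},B_{0})$, a complex of finitely generated projective \emph{left} $B_{0}$-modules. Since each $C^{i}$ is a direct summand of some $B_{0}^{n}$ and $\pi$ is not a zero divisor, multiplication by $\pi$ is injective on $C^{i}$, so there is a short exact sequence of complexes of left $B_{0}$-modules
\[
0\to C^{\bullet}\xrightarrow{\pi}C^{\bullet}\to C^{\bullet}/\pi C^{\bullet}\to 0.
\]

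Next I would identify the quotient. For $P_{i}$ finitely generated projective one checks $\Hom_{B_{0}}(P_{i},B_{0})/\pi\cong\Hom_{B_{0}}(P_{i},\overline{B})$ (verify on $B_{0}^{n}$ and pass to summands), and by Hom-tensor adjunction this equals $\Hom_{\overline{B}}(P_{i}\otimes_{B_{0}}\overline{B},\overline{B})$. The hypothesis that $M$ is $\pi$-torsion free, combined with the length-one resolution $0\to B_{0}\xrightarrow{\pi}B_{0}\to\overline{B}\to 0$ (valid because $\pi$ is central and not a zero divisor), gives $\textnormal{Tor}_{j}^{B_{0}}(M,\overline{B})=0$ for $j\geq 1$, so $P_{\bullet}\otimes_{B_{0}}\overline{B}$ is a projective resolution of $M\otimes_{B_{0}}\overline{B}$ over $\overline{B}$. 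Consequently $H^{i}(C^{\bullet}/\pi C^{\bullet})=\textnormal{Ext}^{i}_{\overline{B}}(M\otimes_{B_{0}}\overline{B},\overline{B})$, while of course $H^{i}(C^{\bullet})=\textnormal{Ext}^{i}_{B_{0}}(M,B_{0})$.

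Taking the long exact sequence associated with the short exact sequence of complexes produces
\[
\cdots\to\textnormal{Ext}^{i}_{B_{0}}(M,B_{0})\xrightarrow{\pi}\textnormal{Ext}^{i}_{B_{0}}(M,B_{0})\to\textnormal{Ext}^{i}_{\overline{B}}(M\otimes_{B_{0}}\overline{B},\overline{B})\to\textnormal{Ext}^{i+1}_{B_{0}}(M,B_{0})\xrightarrow{\pi}\cdots
\]
which splits into the short exact sequences
\[
0\to\textnormal{Ext}^{i}_{B_{0}}(M,B_{0})/\pi\to\textnormal{Ext}^{i}_{\overline{B}}(M\otimes_{B_{0}}\overline{B},\overline{B})\to\textnormal{Ext}^{i+1}_{B_{0}}(M,B_{0})[\pi]\to 0,
\]
where $[\pi]$ denotes the $\pi$-torsion. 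Finally I would identify the outer terms: the quotient on the left is $\overline{B}\otimes_{B_{0}}\textnormal{Ext}^{i}_{B_{0}}(M,B_{0})$ tautologically, and the $\pi$-torsion on the right is $\textnormal{Tor}_{1}^{B_{0}}(\overline{B},\textnormal{Ext}^{i+1}_{B_{0}}(M,B_{0}))$ by the same length-one resolution of $\overline{B}$.

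There is no real obstacle here, only bookkeeping: the only thing to watch is the left/right discipline (switching sides under $\Hom_{B_{0}}(-,B_{0})$) and the repeated use that $\pi$ being central ensures that $\pi B_{0}$ is a two-sided ideal and that the standard commutative-algebra identifications involving $\pi$-torsion and tensoring with $\overline{B}$ go through verbatim on both sides.
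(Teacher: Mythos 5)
Your proof is correct, and it takes a genuinely different route from the paper's. The paper computes $\textnormal{Ext}^{i}_{\overline{B}}(M\otimes_{B_{0}}\overline{B},\overline{B})$ as the cohomology of $\overline{B}\otimes_{B_{0}}Q_{\bullet}$ (where $Q_{\bullet}=\Hom_{B_{0}}(P^{\bullet},B_{0})$) and then obtains the short exact sequences from the K\"unneth spectral sequence $E^{2}_{i,j}=\textnormal{Tor}_{i}^{B_{0}}(\overline{B},H_{j}(Q_{\bullet}))\Rightarrow H_{i+j}(\overline{B}\otimes_{B_{0}}Q_{\bullet})$, which degenerates since $\textnormal{Tor}_{i}^{B_{0}}(\overline{B},-)=0$ for $i>1$; because the spectral sequence in the literature is stated for an arbitrary right module and a priori only produces additive maps, the paper then has to make a supplementary remark (following the proof of \cite[Thm 3.6.1]{Weibel} via the decomposition $0\to\ker d_{\bullet}\otimes\overline{B}\to Q_{\bullet}\otimes\overline{B}\to\im d_{\bullet}\otimes\overline{B}\to0$) to see that the arrows are genuinely $\overline{B}$-linear. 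You instead use the short exact sequence of complexes of $B_{0}$-modules $0\to C^{\bullet}\xrightarrow{\pi}C^{\bullet}\to C^{\bullet}/\pi C^{\bullet}\to 0$ and pass to the long exact sequence in cohomology; the resulting short exact sequences $0\to\textnormal{Ext}^{i}_{B_{0}}(M,B_{0})/\pi\to\textnormal{Ext}^{i}_{\overline{B}}(M\otimes\overline{B},\overline{B})\to\textnormal{Ext}^{i+1}_{B_{0}}(M,B_{0})[\pi]\to 0$ consist of $B_{0}$-linear maps between modules all annihilated by $\pi$, so $\overline{B}$-linearity is automatic with no further discussion. Your approach is therefore slightly more economical: it needs only the classical snake-lemma long exact sequence, avoids invoking the K\"unneth spectral sequence altogether, and dispenses with the linearity caveat by construction. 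The two arguments use the same input (length-one resolution of $\overline{B}$, $\pi$-torsion freeness of $M$ to see that $P_{\bullet}\otimes_{B_{0}}\overline{B}$ resolves $M\otimes_{B_{0}}\overline{B}$, and the identifications $\Hom_{B_{0}}(P,B_{0})/\pi\cong\Hom_{\overline{B}}(P\otimes\overline{B},\overline{B})$) and land on the same short exact sequences after identifying $N/\pi N=\overline{B}\otimes_{B_{0}}N$ and $N[\pi]=\textnormal{Tor}_{1}^{B_{0}}(\overline{B},N)$.
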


\begin{proof}
Note that $\overline{B}$ has a projective resolution
\begin{equation}\label{pi-resolution}
0\to B_{0}\xrightarrow{\times\pi} B_{0}\to \overline{B}\to0
\end{equation}
Thus for any right $B_{0}$-module $M$ we have $\textnormal{Tor}_{i}^{B_{0}}(M,\overline{B})=0$ for $i\geq2$ and
\[
\textnormal{Tor}_{1}^{B_{0}}(M,\overline{B})=\{m\in M:m\pi=0\}
\]
In particular, if $M$ is $\pi$-torsion free and if
\[
P^{\bullet}=[0\to P^{-n}\to\dots\to P^{-1}\to P^{0}\to 0]
\]
is a projective resolution of $M$ by finitely generated modules then 
\[
\overline{P}^{\bullet}=P^{\bullet}\otimes_{B_{0}}\overline{B}
\]
is a projective resolution of $M\otimes_{B_{0}}\overline{B}$.  Set 
\begin{equation}\label{Qcomplex}
Q_{\bullet}=\textnormal{Hom}_{B_{0}}(P^{\bullet},B_{0})
\end{equation}
This is a complex of finitely generated projective left $B_{0}$-modules and we have 
\begin{equation}\label{ext1}
H_{i}(Q_{\bullet})=\textnormal{Ext}^{-i}_{B_{0}}(M,B_{0})
\end{equation}
On the other hand
\begin{equation}\label{ext2}
\textnormal{Ext}^{-i}_{\overline{B}}(M\otimes_{B_{0}}\overline{B},\overline{B})=H_{i}(\textnormal{Hom}(\overline{P}^{\bullet},\overline{B}))=H_{i}(\overline{B}\otimes_{B_{0}}Q_{\bullet})
\end{equation}
Here the first equality holds because $\overline{P}^{\bullet}$ is a projective resolution of $M\otimes_{B_{0}}\overline{B}$ and the second equality holds because for any finitely generated projective right $B_{0}$-module $P$ we have natural isomorphisms of left $\overline{B}$-modules
\[
\overline{B}\otimes_{B_{0}}\textnormal{Hom}_{B_{0}}(P,B_{0})=\textnormal{Hom}_{B_{0}}(P,\overline{B})=\textnormal{Hom}_{\overline{B}}(P\otimes_{B_{0}}\overline{B},\overline{B})
\]

Consider the following claim:
\textit{If $Q_{\bullet}$ is a bounded chain complex of finitely generated projective left $B_{0}$-modules then there exist exact sequences of left $\overline{B}$-modules}
\begin{equation}\label{claim}
0\to \overline{B}\otimes_{B_{0}}H_{j}(Q_{\bullet})\to H_{j}(\overline{B}\otimes_{B_{0}}Q_{\bullet})\to\textnormal{Tor}_{1}^{B_{0}}(\overline{B}, H_{j-1}(Q_{\bullet}))\to0
\end{equation}

Once we have proven the claim we are done with the proof because of equalities (\ref{ext1}) and (\ref{ext2}). The fastest way to show existence of exact sequences (\ref{claim}) is to use Künneth's spectral sequence \cite[Theorem 5.6.4]{Weibel} (see also \cite[Theorem 10.90]{Rot} for the formulation over noncommutative rings)
\begin{equation}\label{spectralsequence}
E^{2}_{i,j}=\textnormal{Tor}_{i}^{B_{0}}(\overline{B},H_{j}(Q_{\bullet}))\Rightarrow H_{i+j}(\overline{B}\otimes_{B_{0}}Q_{\bullet})
\end{equation}
and note that because of the resolution (\ref{pi-resolution}) we have $\textnormal{Tor}^{i}(\overline{B},-)=0$  for $i\neq0,1$ and the spectral sequence degenerates to short exact sequences
\begin{equation}\label{degeneracy}
0\to E^{2}_{0,j}\to H_{j}(\overline{B}\otimes_{B_{0}}Q_{\bullet})\to E^{2}_{1,j-1}\to0
\end{equation}

The problem with this approach is that in the literature existence of the spectral sequence (\ref{spectralsequence}) is usually formulated with $\overline{B}$ replaced by an arbitrary right $B_{0}$-module. Therefore formally one needs to check that maps in sequences (\ref{degeneracy}) are in fact $\overline{B}$-linear and not only additive (which is usually the case for tensor product of a left and a right module over a noncommutative ring). Alternatively, we can notice that if $d_{\bullet}$ is a differential in $Q_{\bullet}$ then as in the proof of \cite[Thm 3.6.1]{Weibel} we have the short exact sequence of complexes
\begin{equation}\label{361}
0\to\ker d_{\bullet}\otimes_{B_{0}}\overline{B}\to Q_{\bullet}\otimes_{B_{0}}\overline{B}\to \im d_{\bullet}\otimes_{B_{0}}\overline{B}\to0
\end{equation}
This is again a consequence of description of $\textnormal{Tor}^{i}(-,\overline{B}).$ Based on this observation we can copy the proof of \cite[Theorem 3.6.1]{Weibel} to prove our claim. Then $\overline{B}$-linearity is clear because the arrows in short exact sequences come from the long exact sequence associated to \ref{361}.
\end{proof}

\subsection{Reduction of lattices}\label{DVRsection2}

Lemma \ref{Lattices1} below is a simple generalization of a classical results that appear in many branches of mathematics. For example in  algebraic geometry a variant of Lemma \ref{Lattices1} for vector bundles with integrable connections is due to O.\ Gabber and may be found in a book of Katz \cite[Variant 2.5.2]{Katz}. More recently similar argument was used by A.\ Langer in \cite{langer2022moduli}. There is also a variant of Lemma \ref{Lattices1} in representation theory of finite groups over fields of positive characteristic (see \cite[Theorem 2.2.3]{Schneider})

Our definition of a lattice given in the introduction can be formulated in a more general setting as follows. A \textit{lattice} in a finitely generated $B$-module $M$ is a finitely generated $B_{0}$-submodule $L\subset M$ such that $B\otimes_{B_{0}}L=L[\pi^{-1}]=M.$ We set $\overline{L}=L/\pi L.$

Recall that a module $N$ is \textit{of finite length} if it has finite composition series $0=N_{0}\subset N_{1}\subset\dots\subset N_{r}=N$  where the factors $N_{i}/N_{i-1}$ are simple modules. The module $N^{\rm ss}=\bigoplus_{i=1}^{r}N_{i}/N_{i-1}$ does not depend on the choice of the composition series and is called the \textit{semisimplification} of $N.$

\begin{lem}\label{Lattices1}
Let $M$ be a finitely generated left $B$-module and let $L_{1},L_{2}\subset M$ be two lattices. If $\overline{L}_{1}$ has finite length then so does $\overline{L}_{2}$ and they have isomorphic semisimplifications.
\end{lem}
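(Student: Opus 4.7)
My strategy is to first reduce the statement to a nested pair of lattices, and then prove the nested case by induction on how much one has to scale one lattice by $\pi$ to sit inside the other.

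For the reduction, observe that the sum $L := L_1 + L_2$ is finitely generated (as the image of $L_1 \oplus L_2 \to M$) and satisfies $L[\pi^{-1}] = M$, hence is itself a lattice containing both $L_i$. Since each $L_j$ is finitely generated and its generators lie in $M = L_i[\pi^{-1}]$, there also exist integers $N_i \geq 0$ with $\pi^{N_i} L \subset L_i$. Thus it suffices to prove the following symmetric nested version: \emph{if $L' \subset L$ are lattices with $\pi^N L \subset L'$, then $\overline{L}$ is of finite length if and only if $\overline{L'}$ is, and in that case the two share a common semisimplification.} Granting this and applying it to $L_1 \subset L$ and $L_2 \subset L$ gives the lemma.

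For the nested statement I would induct on $N$, the case $N = 0$ being trivial. For the inductive step set $L_3 := L' + \pi L$, so $L' \subset L_3 \subset L$ and $\pi^{N-1} L_3 = \pi^{N-1} L' + \pi^N L \subset L'$. The cokernel $C := L/L_3$ is a quotient of $\overline{L}$, hence annihilated by $\pi$, so it is a $\overline{B}$-module. Since $L$ and $L_3$ sit inside the $\pi$-torsion-free module $M$, the computation of $\mathrm{Tor}^{B_0}_*(-,\overline{B})$ from the resolution \eqref{pi-resolution} (as used in the proof of Lemma \ref{Kunneth}) collapses the long exact sequence of $0 \to L_3 \to L \to C \to 0$ into the four-term exact sequence
\[
0 \to C \to \overline{L}_3 \to \overline{L} \to C \to 0.
\]
Splitting this along the common image $I := \mathrm{im}(\overline{L}_3 \to \overline{L})$ produces short exact sequences $0 \to C \to \overline{L}_3 \to I \to 0$ and $0 \to I \to \overline{L} \to C \to 0$, from which one reads off that $\overline{L}$ and $\overline{L}_3$ are simultaneously of finite length and, by Jordan--Hölder, that $\overline{L}_3^{\rm ss} \cong I^{\rm ss} \oplus C^{\rm ss} \cong \overline{L}^{\rm ss}$.

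Applying the inductive hypothesis to $L' \subset L_3$ with exponent $N-1$ then transports the conclusion from $L_3$ to $L'$ and closes the induction. The only genuinely nontrivial input is the vanishing of the connecting $\mathrm{Tor}_1$ terms, which is immediate from $\pi$-torsion-freeness of $L$ and $L_3$ inside $M$; the remaining step is careful bookkeeping with Jordan--Hölder decompositions, and is where one must be a little attentive to make sure the two sides agree as semisimple modules and not merely have the same length.
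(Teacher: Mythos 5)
Your proof is correct and follows essentially the same strategy as the paper's: reduce to a nested pair of lattices, induct on the nesting exponent, and in the one-step case produce two short exact sequences involving a common pair of subquotients, from which finite length and equality of semisimplifications follow by Jordan--H\"older. The minor cosmetic differences are that you pass through the common overlattice $L_1+L_2$ and use the $\textnormal{Tor}$ long exact sequence for $0\to L_3\to L\to L/L_3\to0$, whereas the paper rescales $L_2$ to arrange $L_2\subset L_1\subset\pi^{-n}L_2$, uses the intersection $L_1\cap\pi^{-n+1}L_2$ as the intermediate lattice, and derives the same two short exact sequences from the explicit maps $\varphi,\psi$ induced on reductions; one checks that $\textnormal{im}\,\psi\cong L_1/L_2$ and $\textnormal{im}\,\varphi$ is the common image, so the two derivations yield identical sequences.
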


\begin{proof}
Since $B=\bigcup_{n\in\ZZ}\pi^{n}B_{0}$, there exist integers $n,m\in\ZZ$ with $\pi^{n}L_{2}\subset L_{1}\subset \pi^{m}L_{2}$. Because $\overline{\pi^{k}L}_{i}$ is isomorphic to $\overline{L}_{i}$ we may assume that
\begin{equation}\label{cont1}
L_{2}\subset L_{1}\subset \pi^{-n}L_{2}
\end{equation}
where $n\geq1.$ We prove the lemma by induction on $n$. We do the inductive step first. Assume that $n\geq2$ and that the statement is true for $n-1$. Then the result holds for $n$ because we have containments
\[
L_{2}\subset L_{1}\cap \pi^{-n+1}L_{2}\subset \pi^{-n+1}L_{2}
\]
and
\[
L_{1}\cap \pi^{-n+1}L_{2}\subset L_{1}\subset \pi^{-1}(L_{1}\cap \pi^{-n+1}L_{2})
\]

Therefore we only need to deal with the base for induction, i.e., with the case $n=1.$ We have
\begin{equation}\label{cont2}
L_{1}\subset \pi^{-1}L_{2}\subset \pi^{-1}L_{1}
\end{equation}

Taking reductions of (\ref{cont1}) (for $n=1$) and of (\ref{cont2}) gives exact sequences
\[
\overline{L}_{2}\xrightarrow{\varphi}\overline{L}_{1}\xrightarrow{\psi}\overline{L}_{2}
\]
and
\[
\overline{L}_{1}\xrightarrow{\psi}\overline{L}_{2}\xrightarrow{\varphi}\overline{L}_{1}
\]
where $\varphi$ (resp. $\psi$) is the map induced by the inclusion $L_{2}\subset L_{1}$ (resp. $L_{1}\subset\pi^{-1}L_{2}$). Therefore we have exact sequences
\begin{equation}\label{Ext1}
0\to\textnormal{im}\varphi\to\overline{L}_{1}\to\textnormal{im}\psi\to0
\end{equation}
and
\begin{equation}\label{Ext2}
0\to\textnormal{im}\psi\to\overline{L}_{2}\to\textnormal{im}\varphi\to0
\end{equation}

If $0\to N_{1}\to N\to N_{2}\to 0$ is a short exact sequence of modules then $N$ has finite length if and only if $N_{1}$ and $N_{2}$ have finite length. If this is a case then $N^{\rm ss}=N_{1}^{\rm ss}\oplus M_{2}^{\rm ss}.$ Therefore the result follows from existence of short exact sequences (\ref{Ext1}) and (\ref{Ext2}).
\end{proof}

\subsection{Euler characteristic of a perfect complex over a complete discrete valuation ring}\label{DVRsection3}

For the purpose the next lemma we will need much stronger assumptions. Let $B_{0}$ be a complete discrete valuation ring with the residue field $\ell$ and the quotient field $B$ (so $\ell=\overline{B}$ in the previous notation). We also fix a uniformizer $\pi\in B_{0}.$ Recall that a $B_{0}$-module $M$ is separated for the $\pi$-adic topology if $\bigcap_{n\geq0}\pi^{n}M=\{0\}$ and is complete if $M=\varprojlim_{n\geq0}M/\pi^{n+1}M.$ In particular complete modules are separated.

\begin{lem}\label{PerfectComplex}
Let $(C^{\bullet},d^{\bullet})$ be a complex of complete (for the $\pi$-adic topology), torsion-free $B_{0}$-modules and assume that all $\ell$-vector spaces $H^{i}(C^{\bullet}\otimes_{B_{0}}\ell)$ have finite dimensions. Then
\begin{enumerate}
\item[i)] All $B_{0}$-modules $H^{i}(C^{\bullet})$ are finitely generated and therefore also all $L$-vector spaces $H^{i}(C^{\bullet}\otimes_{B_{0}}B)$ have finite dimensions.

\item[ii)] If $C^{\bullet}$ is bounded then
\begin{equation}\label{Euler}
\sum(-1)^{i}\dim_{\ell}H^{i}(C^{\bullet}\otimes_{B_{0}}\ell)=\sum(-1)^{i}\dim_{L}H^{i}(C^{\bullet}\otimes_{B_{0}}B)
\end{equation}
i.e., the Euler characteristic of $C^{\bullet}$ on the special and the generic fibers are equal.
\end{enumerate}

\end{lem}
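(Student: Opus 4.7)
The plan is to show first that $H^i(C^\bullet)$ is finitely generated over $B_0$, from which part (i) follows by the structure theorem for modules over a DVR and flatness of localization; the Euler characteristic formula in (ii) will then come from a comparison of $\pi$-torsion and $\pi$-free parts.

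First I would exploit $\pi$-torsion-freeness of the $C^i$ to identify $C^\bullet \otimes_{B_0} \ell$ with $C^\bullet/\pi C^\bullet$ and extract from the long exact cohomology sequence of $0 \to C^\bullet \xrightarrow{\pi} C^\bullet \to C^\bullet \otimes \ell \to 0$ the Bockstein-type short exact sequences
\[0 \to H^i(C^\bullet)/\pi H^i(C^\bullet) \to H^i(C^\bullet \otimes_{B_0} \ell) \to H^{i+1}(C^\bullet)[\pi] \to 0.\]
This shows each $H^i(C^\bullet)/\pi H^i(C^\bullet)$ and each $H^i(C^\bullet)[\pi]$ is finite-dimensional over $\ell$. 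Iterating with $0 \to C^\bullet \otimes \ell \xrightarrow{\pi^{n-1}} C^\bullet/\pi^n C^\bullet \to C^\bullet/\pi^{n-1} C^\bullet \to 0$ and inducting on $n$, each $H^i(C^\bullet/\pi^n C^\bullet)$ has finite length over $B_0$.

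Next I would pass to the limit: the tower $\{H^i(C^\bullet/\pi^n C^\bullet)\}_n$ is Mittag--Leffler because its transition maps have decreasing images inside finite-length modules. Completeness of each $C^i$ gives $\varprojlim_n C^\bullet/\pi^n C^\bullet = C^\bullet$ termwise, and the Milnor six-term sequence combined with the Mittag--Leffler property yields $H^i(C^\bullet) \cong \varprojlim_n H^i(C^\bullet/\pi^n C^\bullet)$. This realizes $H^i(C^\bullet)$ as a $\pi$-adically complete and separated module whose reduction modulo $\pi$ is finite-dimensional, and Nakayama's lemma for complete modules over a DVR then gives that $H^i(C^\bullet)$ is finitely generated over $B_0$. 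Since $B_0$ is a PID, $H^i(C^\bullet) \cong B_0^{r_i} \oplus T_i$ with $T_i$ a finite $\pi$-power-torsion module, and flatness of $B_0 \to B$ yields $H^i(C^\bullet \otimes_{B_0} B) \cong H^i(C^\bullet) \otimes_{B_0} B \cong B^{r_i}$, finishing part (i).

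For part (ii), writing $r_i = \dim_B H^i(C^\bullet \otimes_{B_0} B)$, a standard count for finitely generated modules over a DVR gives $\dim_\ell H^i(C^\bullet)/\pi H^i(C^\bullet) - \dim_\ell H^i(C^\bullet)[\pi] = r_i$. Summing the Bockstein identity $\dim_\ell H^i(C^\bullet \otimes \ell) = \dim_\ell H^i(C^\bullet)/\pi H^i(C^\bullet) + \dim_\ell H^{i+1}(C^\bullet)[\pi]$ with alternating signs and reindexing, boundedness of $C^\bullet$ makes the $\pi$-torsion contributions telescope, so $\sum(-1)^i \dim_\ell H^i(C^\bullet \otimes \ell) = \sum(-1)^i r_i$, which is the desired equality. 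The hard part will be the completeness argument in the passage to the limit: one must verify that the inverse-limit topology on $\varprojlim H^i(C^\bullet/\pi^n C^\bullet)$ agrees with the $\pi$-adic topology on $H^i(C^\bullet)$, which is what licenses the Nakayama step. This should follow from each $H^i(C^\bullet/\pi^n C^\bullet)$ being annihilated by $\pi^n$ together with the Mittag--Leffler stabilization of the tower, but it is the delicate point that distinguishes this result from what one gets formally from the long exact sequences alone.
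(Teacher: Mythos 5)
Your part (ii) is essentially the paper's argument: your Bockstein short exact sequences are exactly the paper's K\"unneth sequences (note $H^{i}(C^{\bullet})/\pi H^{i}(C^{\bullet})=H^{i}(C^{\bullet})\otimes_{B_{0}}\ell$ and $H^{i+1}(C^{\bullet})[\pi]=\textnormal{Tor}_{1}^{B_{0}}(H^{i+1}(C^{\bullet}),\ell)$, so the two derivations are two routes to the same sequences), and the alternating-sum/telescoping count is identical. Part (i), however, is where you diverge from the paper. The paper bypasses all of the inverse-limit machinery: it applies a bespoke Nakayama-type lemma directly to the presentation $C^{i-1}\xrightarrow{d^{i-1}}\ker d^{i}\to H^{i}(C^{\bullet})\to 0$, using only that $C^{i-1}$ is complete, that $\ker d^{i}$ is separated (as a submodule of a complete, hence separated, module), and that $H^{i}(C^{\bullet})\otimes\ell$ is finite via the K\"unneth sequence. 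That lemma does the successive-division argument once, upstairs in $\ker d^{i}$, and never mentions $C^{\bullet}/\pi^{n}C^{\bullet}$, Mittag--Leffler, or Milnor at all. Your route --- finite length of each $H^{i}(C^{\bullet}/\pi^{n})$ by induction, Mittag--Leffler of the tower, Milnor to get $H^{i}(C^{\bullet})\cong\varprojlim H^{i}(C^{\bullet}/\pi^{n})$, then Nakayama --- is valid but strictly longer; what it buys you is a statement intrinsic to the cohomology tower, at the cost of more moving parts.

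On the point you flag as delicate: the justification you offer (``each $H^{i}(C^{\bullet}/\pi^{n})$ is killed by $\pi^{n}$, plus ML stabilization'') only gives the easy inclusion $\pi^{m}\varprojlim\subseteq U_{m}$, where $U_{m}=\ker(\varprojlim\to H^{i}(C^{\bullet}/\pi^{m}))$. The reverse inclusion is what you actually need for the $\pi$-adic topology to agree with the limit topology, and ML does not supply it. The correct, and in fact simpler, fix is to use the long exact cohomology sequence of $0\to C^{\bullet}\xrightarrow{\pi^{n}}C^{\bullet}\to C^{\bullet}/\pi^{n}C^{\bullet}\to 0$ (torsion-freeness makes multiplication by $\pi^{n}$ injective on $C^{\bullet}$), which shows directly that $\ker\bigl(H^{i}(C^{\bullet})\to H^{i}(C^{\bullet}/\pi^{n})\bigr)=\pi^{n}H^{i}(C^{\bullet})$. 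Injectivity of the natural map $H^{i}(C^{\bullet})\to\varprojlim H^{i}(C^{\bullet}/\pi^{n})$ then immediately gives $\bigcap_{n}\pi^{n}H^{i}(C^{\bullet})=0$, i.e.\ $\pi$-adic separatedness --- and you do not even need the surjectivity half of the Milnor sequence for this. From there, finitely many lifts of generators of $H^{i}(C^{\bullet})/\pi H^{i}(C^{\bullet})$ generate $H^{i}(C^{\bullet})$ by the usual successive-division argument, using completeness of $B_{0}$ and separatedness of $H^{i}(C^{\bullet})$. With that repair your part (i) goes through; you may want to look at how the paper packages exactly this division argument into its Lemma on ``$V\to W\to Q\to 0$ with $V$ complete, $W$ separated,'' which sidesteps the inverse-limit apparatus entirely.
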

We need the following variant of Nakayama's Lemma.
\begin{lem}\label{Kule}
Let
\[
V\to W\to Q\to0
\]
be an exact sequence of $B_{0}$-modules. Assume that $V$ complete, $W$ is separated and $Q\otimes_{B_{0}}\ell$ is finitely generated. Then $Q$ is finitely generated.
\end{lem}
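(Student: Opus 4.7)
The plan is to mimic the usual argument for finitely generated complete modules, using completeness of $V$ (and of $B_{0}$) to produce convergent sums, and separatedness of $W$ at the end to eliminate the error term.

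First I would pick lifts. Since $Q\otimes_{B_{0}}\ell$ is finitely generated, choose $\bar{q}_{1},\dots,\bar{q}_{n}$ generating it over $\ell$, lift them to $q_{1},\dots,q_{n}\in Q$, and then lift each $q_{i}$ to some $w_{i}\in W$ using the surjection $W\to Q$. Denote by $f\colon V\to W$ the given map and set $V'=f(V)\subset W$. Because $W\to Q$ is surjective with kernel $V'$, we have $W/(V'+\sum_{i}B_{0}w_{i})=Q/\sum_{i}B_{0}q_{i}$, and reducing modulo $\pi$ this quotient vanishes by construction. Therefore
\[
W=V'+\sum_{i=1}^{n}B_{0}w_{i}+\pi W.
\]

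Next, I would iterate this identity to peel off $\pi$-adic layers. Given $w\in W$, write $w=v'_{0}+\sum_{i}a_{i,0}w_{i}+\pi w^{(1)}$ with $v'_{0}\in V'$, $a_{i,0}\in B_{0}$, $w^{(1)}\in W$, and apply the same decomposition to $w^{(1)}$, obtaining inductively sequences $(v'_{k})_{k\geq0}\subset V'$, $(a_{i,k})_{k\geq0}\subset B_{0}$, and $w^{(N)}\in W$ such that
\[
w-\sum_{k=0}^{N-1}\pi^{k}v'_{k}-\sum_{i}\Bigl(\sum_{k=0}^{N-1}\pi^{k}a_{i,k}\Bigr)w_{i}=\pi^{N}w^{(N)}\in\pi^{N}W
\]
for every $N\geq1$. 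Lift each $v'_{k}$ to $v_{k}\in V$. Now completeness of $V$ for the $\pi$-adic topology lets me form $v=\sum_{k\geq0}\pi^{k}v_{k}\in V$, and completeness of $B_{0}$ lets me form $\tilde{a}_{i}=\sum_{k\geq0}\pi^{k}a_{i,k}\in B_{0}$.

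The last step, which I expect to be the only subtle point, is to show that $w=f(v)+\sum_{i}\tilde{a}_{i}w_{i}$ even though $W$ itself is only separated, not complete, so that infinite sums in $W$ need not make sense. The trick is to avoid summing in $W$ altogether: for every $N$ one has $v-\sum_{k=0}^{N-1}\pi^{k}v_{k}\in\pi^{N}V$, so applying the $B_{0}$-linear map $f$ gives $f(v)-\sum_{k=0}^{N-1}\pi^{k}v'_{k}\in\pi^{N}W$, and likewise $\tilde{a}_{i}-\sum_{k=0}^{N-1}\pi^{k}a_{i,k}\in\pi^{N}B_{0}$. Combining these with the iterated identity shows that $w-f(v)-\sum_{i}\tilde{a}_{i}w_{i}$ lies in $\bigcap_{N\geq1}\pi^{N}W=\{0\}$ by separatedness of $W$. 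Hence $W=V'+\sum_{i}B_{0}w_{i}$, and pushing forward to $Q=W/V'$ shows that $q_{1},\dots,q_{n}$ generate $Q$ over $B_{0}$.
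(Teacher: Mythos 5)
Your proof is correct and follows essentially the same strategy as the paper's: pick generators of $Q\otimes_{B_0}\ell$, lift to $W$, iterate the decomposition $W=f(V)+\sum_i B_0 w_i+\pi W$, sum the resulting Cauchy sequences inside $V$ and inside $B_0$ using their completeness, and finally invoke separatedness of $W$ to kill the error term. If anything, your handling of the last step is a touch more careful than the paper's phrasing, since you explicitly note that the limit need not exist in $W$ and instead compare $f(v)$ and $\tilde a_i$ to their partial sums modulo $\pi^N$.
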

\begin{proof}
Since the tensor product is right exact we have a commutative diagram with exact rows.
\[
\begin{tikzcd}
V\arrow{r}{\varphi}\arrow{d}&W\arrow{r}{\psi}\arrow{d}&Q\arrow{r}\arrow{d}&0\\
\overline{V}\arrow{r}{\overline{\varphi}}&\overline{W}\arrow{r}{\overline{\psi}}&\overline{Q}\arrow{r}&0
\end{tikzcd}
\]
Pick generators $\overline{q}_{1},\dots,\overline{q}_{n}\in\overline{Q}=Q\otimes_{B_{0}}\ell$ and let $q_{1},\dots,q_{n}\in Q$ denote lifts of these elements to $Q$. Let $w_{1},\dots,w_{n}\in W$ satisfy $\psi(w_{i})=b_{i}.$ To prove the lemma it suffices to show that for any $x\in W$ there exist $r_{1},\dots,r_{n}\in B_{0}$ and $v\in V$ such that $w=\sum_{i=1}^{n}r_{i}w_{i}+\psi(v).$ Since $\overline{W}$ is generated mod $\im \varphi$ by $\overline{w}_{1},\dots,\overline{w}_{n}$, there exist $r_{1}^{0},\dots,r_{n}^{0}\in B_{0}$, $v_{0}\in V$ and $x_{1}\in W$ such that
\[
x=\sum_{i=1}^{n}r_{i}^{0}w_{i}+\varphi(v_{0})+\pi x_{1}
\]
We can repeat this process for $x_{1}$ to find inductively elements $r_{i}^{0},r_{i}^{1},r_{i}^{2},\dots,\in B_{0}$, $v_{0},v_{1},\dots\in V$ and $x_{1},x_{2},\dots\in W$ such that for every $m\geq1$
\[
x=\sum_{i=1}^{n}(\sum_{j=0}^{m}\pi^{j}r_{i}^{j})w_{i}+\sum_{j=0}^{m}\pi^{j}\varphi(v_{j})+\pi^{m+1}x_{m+1}.
\]
Since $B_{0}$ is complete there exist $r_{i}=\lim _{m\to\infty}\sum_{j=0}^{m}\pi^{j}r_{i}^{j}$. Since $V$ is complete there exists $v=\im_{m\to \infty}\sum_{j=0}^{m}\pi^{j}v_{j}$ and therefore $\varphi(v)=\lim_{m\to \infty}\sum_{j=0}^{m}\pi^{j}\varphi(v_{j}).$ Since $W$ is separated we have
\[
x-\sum_{i=1}^{n}r_{i}w_{i}-\varphi(x)\in\bigcap_{m\geq1}\pi^{m}W=\{0\}
\]
and therefore $x=\sum_{i=1}^{n}r_{i}w_{i}+\varphi(v)$ and we are done.
\end{proof}

\begin{proof}[Proof of Lemma \ref{PerfectComplex}]
Recall that a module over a discrete valuation ring is flat if and only if it is torsion-free. In particular images of $d^{\bullet}$ are also flat and we may invoke the Künneth formula \cite[Theorem 3.6.1]{Weibel}. We have exact sequences
\begin{equation}\label{Kunneth_dvr}
0\to H^{i}(C^{\bullet})\otimes_{B_{0}}\ell\to H^{i}(C^{\bullet}\otimes_{B_{0}}\ell)\to\textnormal{Tor}_{1}^{B_{0}}(H^{i+1}(C^{\bullet}),\ell)\to0
\end{equation}
To prove the first statement of the Lemma we consider the exact sequences
\begin{equation}\label{OntoHn}
C^{n-1}\xrightarrow{d^{n-1}}\ker d^{n}\to H^{n}(C^{\bullet})\to0
\end{equation}
By (\ref{Kunneth_dvr}) and assumptions of our theorem the dimensions $\dim_{\ell}H^{n}(C)\otimes_{B_{0}}\ell$ are finite. Moreover by assumption $C^{n}$ are all complete and thus $\ker d^{n}$ are separated modules as they are submodules of complete (and thus separated) modules. Therefore we may apply Lemma \ref{Kule} to sequences (\ref{OntoHn}) to conclude the first part of the Lemma.

For the second part, recall that it follows from the classification of finitely generated modules over discrete valuation rings that if $M$ is such module then
\begin{equation}\label{finiteDVR}
\dim_{\ell}M\otimes_{B_{0}}\ell-\dim_{B}M\otimes_{B_{0}}B=\dim_{k}\textnormal{Tor}^{B_{0}}_{1}(M,\ell)
\end{equation}
Since $-\otimes_{B_{0}}B$ is the same as localization at $\pi$ it is an exact functor. The first part of the lemma together with (\ref{Kunneth_dvr}) and (\ref{finiteDVR}) imply formula (\ref{Euler}). Indeed, we have
\begin{equation*}
\begin{split}
\sum(-1)^{i}\dim_{\ell}H^{i}(C^{\bullet}\otimes_{B_{0}}\ell)&=\sum(-1)^{i}\dim_{\ell}H^{i}(C^{\bullet})\otimes_{B_{0}}\ell+\sum(-1)^{i}\dim_{\ell}\textnormal{Tor}_{1}^{B_{0}}(H^{i+1}(C^{\bullet}),\ell)\\
&=\sum(-1)^{i}(\dim_{\ell}H^{i}(C^{\bullet})\otimes_{B_{0}}\ell-\dim_{\ell}\textnormal{Tor}_{1}^{B_{0}}(H^{i}(C^{\bullet}),\ell))\\
&=\sum(-1)^{i}\dim_{B}H^{i}(C^{\bullet})\otimes_{B_{0}}B\\
&=\sum(-1)^{i}\dim_{B}H^{i}(C^{\bullet}\otimes_{B_{0}}B).
\end{split} 
\end{equation*}
This finishes the proof.
\end{proof}

\section{Proof of Theorem \ref{MainThm1}}\label{Proof}

We now use lemmas proven in the previous section and some well-known properties of holonomic $\cD$-modules on affine spaces to prove Theorem \ref{MainThm1}.

\begin{proof}[Proof of Theorem \ref{MainThm1}]
First we prove that $i)\implies ii)\implies iii)\implies i)$. We show condition $A)$ as a part of the second implication. Then we show $B).$

$i)\implies ii).$ This is the most tricky part of the proof. It suffices to prove that for any \textit{right} $\Dd$-module $N$ of minimal dimension its dual $N^{*}=\textnormal{Ext}^{n}_{\Dd}(N,\Dd)$ has a lattice with reduction of minimal dimension. Indeed, by Lemma \ref{star} we then may take $N=M^{*}$ which is of minimal dimension and satisfies $N^{*}=M^{**}=M.$

Let $V\subset N$ be some lattice (\textit{a priori} with reduction that is possibly not of minimal dimension). By Lemma \ref{Kunneth} applied to $B_{0}=\Ddo$ and $\pi=\varpi$ we have an inclusion
\[
0\to\overline{\cD}\otimes_{\Ddo}\textnormal{Ext}_{\Ddo}^{d}(V,\Ddo)\to\textnormal{Ext}^{n}_{\overline{\cD}_{n}}(\overline{V},\overline{\cD}_{n})
\]
The key observation is that the module on the left is of minimal dimension. Indeed, since $\overline{\cD}_{n}=\WW_{n}(k)$ and $\overline{V}$ is finitely generated, by part $b)$ of Lemma \ref{MinimalDimension} we know that the module on the right hand side is of minimal dimension. Therefore so is the module on the left hand side by part $a)$ of the same lemma. Now set
\[
T=\{m\in \textnormal{Ext}_{\Ddo}^{n}(V,\Ddo): \varpi^{k}m=0\textnormal{ for some k}\}.
\]
This is a left $\Ddo$-module because $\varpi$ is central in $\Ddo.$  We define $L$ as the quotient of $ \textnormal{Ext}^{n}_{\cDc_{0}}(V,\cDc_{0})$ by $T$, so that it fits into a short exact sequence.
\begin{equation}\label{ses1}
0\to T\to \textnormal{Ext}^{n}_{\cDc_{0}}(V,\cDc_{0})\to L\to0.
\end{equation}
We will show that $L$ is the desired lattice, i.e., that

\begin{enumerate}

\item[(a)] $K\otimes_{\oo}L=N^{*}$ and the natural map $L\to N^{*}$ is injective.

\item[(b)] $L$ is a finitely generated $\Ddo$-module.

\item [(c)] $\overline{\cD}_{n}\otimes_{\Ddo}L$ has minimal dimension.

\end{enumerate}

To show (a) we note that $K\otimes_{\oo}-$ coincides with the localization at $\varpi.$ In particular it is an exact functor (and therefore it commutes with Ext) and (by construction) $K\otimes_{\oo}T=0.$ Tensoring (\ref{ses1}) with $K$ we get that $K\otimes_{\oo}L=N^{*}.$ The natural map $L\to M$ is injective by construction because its kernel consists precisely of $\varpi$-torsion of $L$. Recall that by Lemma \ref{AlgProp2} we know that $\Ddo$ is left and right noetherian. From noetherianity we conclude that because $V$ was finitely generated so is $\textnormal{Ext}_{\Ddo}^{n}(V,\Ddo)$. Then $L$ is also finitely generated because by (\ref{ses1}) it is a quotient of a finitely generated module and (b) follows. Since tensoring is right exact we have an exact sequence of left $\overline{\cD}_{n}$-modules
\[
\overline{\cD}_{n}\otimes_{\Ddo}\textnormal{Ext}^{n}_{\Ddo}(V,\Ddo)\to \overline{\cD}_{n}\otimes_{\Ddo}L\to0,
\]
we conclude (c) from part $a)$ of Lemma \ref{MinimalDimension} because the right hand side is a quotient of a $\overline{\cD}_{n}$-module which we have already observed to be of minimal dimension. Therefore the implication is proven.

$ii)\implies iii).$ This is a consequence of Lemma \ref{Lattices1}. Indeed, it is known \cite[Ch. 1 Prop. 5.3]{Bjork} that finitely generated $\overline{\cD}_{n}$-modules of minimal dimension are of finite length. It follows by induction from part $a)$ of Lemma \ref{MinimalDimension} that a semisimplification of a $\overline{\cD}_{n}$-module of finite length has minimal dimension if and only if the module itself has minimal dimension. Therefore we may use Lemma \ref{Lattices1} to get the desired implication. We also get $A)$ as a byproduct.

$iii)\implies i).$ Let $L\subset M$ be a lattice such that $\overline{L}$ has minimal dimension. By the very definition we have $\textnormal{Ext}_{\overline{\cD}}^{i}(\overline{L},\overline{\cD})=0$ for $0\leq i\leq n-1$.Then short exact sequences of Lemma \ref{Kunneth} for $B_{0}=\Ddo$ give
\[
0\to\textnormal{Ext}_{\Ddo}^{i}(L,\Ddo)\otimes_{\Ddo}\overline{\cD}_{n}\to\textnormal{Ext}^{i}_{\overline{\cD}_{n}}(\overline{L},\overline{\cD}_{n})=0
\]
i.e., $\textnormal{Ext}_{\cDc_{0}}^{i}(L,\cDc_{0})\otimes_{\cDc_{0}}\overline{\cD}=0$ for $i<n.$ By noetherianity of $\Ddo$ (Lemma \ref{AlgProp2}) we know that the right $\Ddo$-modules $\textnormal{Ext}_{\Ddo}^{i}(L,\Ddo)$ are finitely generated and therefore must be zero by Nakayama's lemma part of Lemma \ref{Lattices0}. As we have already explained while proving that $i)\implies ii)$, we always have isomorphisms $\textnormal{Ext}_{\cDc}^{i}(M,\cDc)=\textnormal{Ext}_{\cDc_{0}}^{i}(L,\cDc_{0})\otimes_{\oo}K$. We conclude that $\textnormal{Ext}_{\cDc}^{i}(M,\cDc)$ must vanish for $i<n,$ i.e., $M$ has minimal dimension. This closes the circle of implications.

To prove $B)$ we use Lemma \ref{PerfectComplex}. Assume that equivalent conditions of Theorem \ref{MainThm2} hold for $M$ and let $L\subset M$ be a lattice which has a reduction of minimal dimension. Consider the complex
\[
\DR^{\bullet}(L)=L\to\bigoplus_{i=1}^{d}Ldx_{i}\to\bigoplus_{i<j}Ldx_{i}\wedge dx_{j}\to\dots 
\]
with differentials as in (\ref{deRhamdif}). This is a bounded complex of complete (by Lemma \ref{Lattices0}) and torsion-free (since lattices are $\varpi$-torsion free) $\oo$-modules. Note that by construction we have
\[
\DR^{\bullet}(L)\otimes_{\oo}K=\DR^{\bullet}_{\Dd}(M)
\]
and
\[
\DR^{\bullet}(L)\otimes_{\oo}k=\DR^{\bullet}_{\overline{\cD}_{n}}(\overline{L})
\]
The latter has finitely-dimensional cohomology over $k$ by part $d)$ of Lemma \ref{MinimalDimension}. We may now apply Lemma \ref{PerfectComplex} and conclude that $\dim_{K}H_{dR}^{i}(M)<\infty$ for all $i$ and moreover $\chi_{dR}(M)=\chi_{dR}(\overline{L}).$
\end{proof}

%BIBLIOGRAFIA%

\bibliographystyle{plain}
\bibliography{Bibliography}

\end{document}